     \def\section{\@startsection{section}{1}%
     \z@{.7\linespacing\@plus\linespacing}{.5\linespacing}%
     {\bfseries
     \centering
     }}
     \def\@secnumfont{\bfseries}
\newtheorem{theorem}{Theorem}[section]
\newtheorem{proposition}[theorem]{Proposition}
\theoremstyle{definition}
\theoremstyle{remark}
\newtheorem{remark}[theorem]{Remark}
\theoremstyle{remarks}
\numberwithin{equation}{section}
\def \a{{\alpha}}
\def \d{{\delta}}
\def \e{{\varepsilon}}
\def \g{{\gamma}}
\def \l{{\lambda}}
\def \t{{\theta}}
\def \o{{\omega}}
\def \m{{\mu}}
\def \qq{{\qquad}}
\def \R{{\bf R}}
\def \dd{{\rm d}}
\def\R{{\mathbb R}}
  \font\sevenrm= cmr10 at 7,3 pt
 \font\eightrm= cmr10 at 8 pt
\def\ddate {\sevenrm \ifcase\month\or January\or
February\or March\or April\or May\or June\or July\or
August\or September\or October\or November\or December\fi\! {\the\day}, \!{\sevenrm\the\year}}
     \title[  $\hbox{\eightrm On the equation}$\   $n_1n_2=n_3n_4$\ $\hbox{\eightrm restricted\ to\ FC\ sets}$]
   { $\bf{On\ the\ equation}$  $ \boldsymbol{n_1n_2=n_3n_4}$\\  $\bf{restricted\ to\ factor\ closed \ sets}$}
\begin{document}
  \author{Sanying Shi}
 \address{School  of Math., Hefei University of Technology, Hefei 230009, P. R. China.}
 \email{\tt vera123${}_{-}$99@hotmail.com}
  \author{Michel  Weber}
 \address{IRMA, UMR 7501,   7  rue  Ren\'e Descartes, 67084 Strasbourg Cedex, France.}
\email{michel.weber@math.unistra.fr}

  \keywords{Diophantine equation,    arithmetical functions, factor closed set.
\vskip 1 pt    \emph{2010 Mathematical Subject Classification}: Primary  11D57,    Secondary  11A05, 11A25.\vskip 1 pt \ddate{}}

  \begin{abstract}  
   We study the number of solutions $N(B,F)$ of the diophantine equation $n_1n_2=n_3n_4$, where $1\le n_1\le B$, 
$1\le n_3\le B$, $n_2, n_4\in F$ and $F\subset [1,B]$ is a factor closed set.  We study more particularly the case when $F= \big\{m=p_1^{\e_1}\ldots
p_k^{\e_k}, \e_j\in \{0,1\}, 1\le j\le k\big\}$, 
   $p_1,\ldots,p_k$ being distinct prime numbers.

\end{abstract}

 \maketitle

%%%%%%%%%%%%%%%%%%%%%%%%%%%%%
\section{\bf Introduction-Results.} \label{s1}
%%%%%%%%%%%%%%%%%%%%%%%%%%%%%

 The number  of solutions $N(B)$ of the diophantine equation
  $n_1n_{2} =n_{3} n_{4}$  with unknowns verifying $1\le n_i\le B$,  $1\le i\le 4$
    was studied in Shi \cite{S},  where the following very precise estimate (see Theorem 1), improving  upon previous similar estimate by Ayyad, Cochrane and Zheng \cite{ACZ} is established,
 \begin{equation}\label{e.2}N(B) = \frac{12}{\pi^2}B^2\log B + CB^2 + \mathcal O \big( B^{\frac{547}{416}+\e}\big),\end{equation}
where
$C= \frac{36}{\pi^2}\g_0 -2\g_1+\frac{12}{\pi^2}\g_2-\frac{24}{\pi^2}-1=0.511317447...$  and
$\g_1=\frac{36}{\pi^2}\zeta'(2)$, $\g_2= \frac{3}{2} -\g_0-\frac{\pi^2}{12}$,
$\g_0$ being Euler's constant. The proof connects the evaluation of $N(B)$ with the error term in the Dirichlet divisor problem, and only involves elementary arithmetic. Some more simplifications were provided by  Liu and Zhai \cite{LZ}.

\vskip 3 pt In this work, we  study the following variant of the initial equation. Let
$F$ be a finite set of integers.  We assume that
$F$ is   factor closed, in short $F$ is an FC set. By this we mean that $d|k\Rightarrow d\in F $, for all $k \in F$; in particular $1\in F$. By definition, this notion extends to sets $F$ formed with {not necessarily distinct} elements, for instance the set $\mathcal M_k(B)$, $B\ge 1$, of all possible products $m_1\ldots m_{k}$ obtained by taking  $1\le m_i\le B$, $1\le i\le k$.
We refer to Haukkanen,  Wang  and   Sillanp\"a\"a \cite{HWS} (see also references therein) concerning this
notion and extensions, also  Weber  \cite{W1}. Typical examples of FC sets are naturally intervals $[1,B]$, the set of divisors of an integer, the set of squarefree integers less than $B$, the multiplicative semi-group  generated by a given set of integers, the trace $F\cap [1,B]$ of an FC set $F$. \vskip 2 pt
Let $N(B,F)$ denote the number of integers solutions of   the restricted equation
   \begin{eqnarray}\label{e2} n_1n_2 =n_3n_4,
    \end{eqnarray}  where the unknowns satisfy $1\le n_1\le B$, $1\le n_3\le B$, $n_2, n_4\in F$. 
\vskip 2 pt
    It is of interest to  observe that the initial equation 
is just a particular case of equation \eqref{e2}. This can be generalized. Let $k\ge 1$ and let $N_{k}(B)$ denote the number of  solutions  of
the equation  \begin{equation}\label{e.k+1}n_1\ldots n_{k+1} =n_{k+2}\ldots n_{2(k+1)},
\end{equation}
  with unknowns verifying $1\le n_i\le B$,  $1\le i\le 2(k+1)$.
One sees that solving equation \eqref{e2} amounts to solving equation \eqref{e.k+1} with $F=\mathcal M_k(B)$.
%Take  $F=\mathcal M_k(B)$.
%One easily verifies that equation \eqref{e2} coincides with  equation \eqref{e.k+1}, and so is a natural generalization of this one.

 We also consider equation \eqref{e2} with the constraint that all unknowns must belong to $F$, and denote by $N(F,F)$ the corresponding number of solutions. 
 \vskip 2  pt 
 We use the approach developed in \cite{S} to establish the following results. Given two numbers $n$ and $m$, let $n\vee m= \max (n,m)$.

\begin{proposition} \label{n}For any FC set $F$ and any integer $B\ge 1$, we have
  \begin{eqnarray*}       N(B,F)&=&\sum_{ n, m\in F\atop \gcd(n,m)=1}\Big( \sum_{  \m, \nu\in F\atop \frac{\nu}{\m}= \frac{m}{n}} 1\Big)\   \Big\lfloor \frac{B}{n\vee m}\Big\rfloor.
    \end{eqnarray*}
      If $F\subset [1,B]$, then
   \begin{eqnarray*}
    N(B,F)& \le & 2 B^2  \sum_{n \in F\atop n\neq 1}\frac{1}{n}  +B^2,
    \cr N(B,F)& \ge  & \sum_{\m,\nu\in F\atop } \gcd(\m,\nu),
 \cr
   N(F, F )& =&\sum_{ n, m\in F\atop \gcd(n,m)=1}\Big( \sum_{  n_2, n_4\in F\atop \frac{n_4}{n_2}= \frac{m}{n}} 1\Big)^2  .
    \end{eqnarray*} \end{proposition}
 %\begin{remark} By taking $F=[1,B]$, we recover that $N(B)\le C B^2 \log B$. \end{remark}
 \begin{remark} For the case $F=[1,B]$, we recover that $N(B)\le C B^2 \log B$. %The lower bound is sharp. 
 %For the case $F=[1,B]$ again, 
 We also recall that (\cite[Exercise 59]{T})
 \begin{eqnarray} 
  \sum_{1\le \m,\nu\le B} \gcd(\m,\nu)= \frac{6}{\pi^2}   B^2\log B+ \mathcal O(B^2). 
 \end{eqnarray}
\end{remark}
 \vskip 4  pt 
     We deduce
    
     \begin{theorem}\label{nf} Suppose that $F=\{m_1m_2: 1\le m_1,m_2\le [B^{\a}]\}$ with $\a \in [0,1/2]$ . We have
    \begin{eqnarray*}
    N(B, F )\le \Big(1+\frac {12}{\pi^2}+2\sum_{  B^{\a} <m\le B^{2\a}\atop    m\in F}\frac 1{m}\Big) B^{1+2\a}.
     \end{eqnarray*}
  \end{theorem}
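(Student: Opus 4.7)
The plan is to combine the structural bounds from Propositions~\ref{n} and \ref{otherprop} with the specific combinatorics of $F$, noting in particular that $F$ is factor closed and $|F|\le[B^\a]^2\le B^{2\a}$. Factor-closedness of $F=\{m_1m_2:1\le m_1,m_2\le[B^\a]\}$ follows from the prime-by-prime construction $v_p(d_1)=\min(v_p(d),v_p(m_1))$, $v_p(d_2)=v_p(d)-v_p(d_1)$, which decomposes any divisor $d$ of $n=m_1m_2\in F$ into $d=d_1d_2$ with $d_i\le m_i\le[B^\a]$.

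The first step is the identity
\[ N(B,F)=\sum_{n_2,n_4\in F}\Big[\frac{B\gcd(n_2,n_4)}{\max(n_2,n_4)}\Big]=B|F|+2\sum_{\substack{n_2<n_4\\ n_2,n_4\in F}}\Big[\frac{B\gcd(n_2,n_4)}{n_4}\Big], \]
obtained by parametrising the solutions $(n_1,n_3)$ as $n_1=bk$, $n_3=ak$ with $a=n_2/\gcd$, $b=n_4/\gcd$ coprime and counting admissible $k$. The diagonal bound $B|F|\le B^{1+2\a}$ supplies the leading $1$ of the announced coefficient.

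For the off-diagonal part I would split according to $n_4\le[B^\a]$ or $n_4>[B^\a]$. When $n_4\le[B^\a]$, both $n_2,n_4$ lie in $[1,[B^\a]]\subset F$, so Pillai's identity $\sum_{k=1}^n\gcd(k,n)=n\sum_{e\mid n}\varphi(e)/e$ combined with Abel summation from $\sum_{e\le X}\varphi(e)/e^2=(6/\pi^2)\log X+O(1)$ bounds this part by $(12/\pi^2)B[B^\a]\log[B^\a]\le(12/\pi^2)B^{1+2\a}$, which furnishes the $12/\pi^2$ coefficient. When $n_4>[B^\a]$, using $\gcd=\sum_{d\mid\gcd}\varphi(d)$ together with the trivial estimate $\#\{n_2\in F:d\mid n_2\}\le[B^\a]^2/d$ yields
\[ \sum_{n_2\in F,\,n_2<n_4}\gcd(n_2,n_4)\le[B^\a]^2\sum_{e\mid n_4}\varphi(e)/e; \]
after dividing by $n_4$, the $e=1$ piece accumulates to $2B^{1+2\a}\sum_{B^\a<m\le B^{2\a},\,m\in F}1/m$, which is exactly the last term of the announced bound.

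The main obstacle is controlling the residue from the $e\ge 2$ contributions in $\sum_{e\mid n_4}\varphi(e)/e$: a naive continuation of the above produces a spurious $\log B$ factor from the partial sum of $\varphi(e)/e^2$ truncated at $[B^\a]^2$. To absorb it within the fixed constant $12/\pi^2$, I would replace the crude $\#\{n_2\in F:d\mid n_2\}\le[B^\a]^2/d$ by an $F$-sensitive count exploiting the factor closedness of $F$, which is precisely the refinement supplied by Proposition~\ref{otherprop}. With that sharper input the three pieces (diagonal, small-$n_4$ off-diagonal, large-$n_4$ off-diagonal) assemble to the announced inequality.
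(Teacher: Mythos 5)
Your starting identity (writing $N(B,F)=\sum_{n_2,n_4\in F}[B\gcd(n_2,n_4)/\max(n_2,n_4)]$) is the same as the paper's \eqref{nbf}, just parametrized by $(n_2,n_4)$ instead of $(n,m,\lambda)$ with $n=n_2/\gcd$, $m=n_4/\gcd$, and your diagonal bound $B|F|\le B^{1+2\a}$ matches the paper's $\#(F)[B]$. The decisive divergence is your choice of splitting variable: you cut by the size of $n_4$, whereas the paper cuts by the size of the coprime core $m=n_4/\gcd(n_2,n_4)$. That seemingly small change is what makes the paper's argument close and yours not. With the paper's split, the large-$m$ regime gives a bound $\#\{\lambda:\lambda n,\lambda m\in F\}\le B^{2\a}/m$ that, multiplied by $[B/m]\phi(m)/m$, directly yields $B^{1+2\a}\sum_{B^\a<m\le B^{2\a},\,m\in F}1/m$ with no divisor sum over $d\mid n_4$ appearing at all; the small-$m$ regime is handled by a bound on $\#\{\lambda\}$ \emph{uniform in $m$}, so that $\sum_{m\le B^\a}[B/m]\phi(m)\sim(6/\pi^2)B^{1+\a}\cdot B^\a$ delivers the $12/\pi^2$ constant without any logarithmic loss.

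Your split by $n_4$ forces you to expand $\gcd(n_2,n_4)=\sum_{d\mid\gcd}\phi(d)$ in the large-$n_4$ range and then control $\#\{n_2\in F:d\mid n_2\}$ for every $d\mid n_4$. You correctly identify that the $d\ge 2$ terms then produce an extra $\log B$ (indeed, plugging in $\#\{n_2\in F:d\mid n_2\}\le[B^\a]^2/d$ and summing the geometric tail gives something of order $B^{1+2\a}(\log B)^2$, not $O(B^{1+2\a})$). This is a genuine gap, and the sentence gesturing at ``an $F$-sensitive count exploiting factor closedness, supplied by \eqref{otherprop}'' does not fill it: \eqref{otherprop} is stated in the paper without proof, it is not used in the paper's Section~\ref{s4}, and the paper itself remarks that \eqref{otherprop} gives \emph{weaker} estimates in the cases it examines. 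There is also a structural mismatch: the $12/\pi^2$ coefficient in the theorem must be accounted for by a contribution of true size $\asymp B^{1+2\a}$, but your small-$n_4$ block (via Pillai's identity) is only of size $O(B^{1+\a}\log B)\ll B^{1+2\a}$. So the dominant part of the off-diagonal must live in your large-$n_4$ block (pairs with $n_4$ large but $n_4/\gcd$ small), which is exactly the part your method cannot control. I would either switch to the paper's split by $m$, or argue directly that $\#\{\lambda:\lambda n,\lambda m\in F\}=O([B^\a])$ uniformly for $n<m\le B^\a$ coprime (note the paper asserts equality here, which is not correct, but a uniform $O([B^\a])$ bound would suffice).
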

  Let us consider the following  typical example of a FC set. Let
\begin{equation} \label{F} F= \Big\{m=p_1^{\e_1}\ldots p_k^{\e_k}\,|\, \e_j\in \{0,1\}, 1\le j\le k\Big\}, 
\end{equation}
where  $p_1<\ldots<p_k$ are prime numbers.
Recall that $\o(n)=\#\{p\,\hbox{prime}:p|n\}$ is the prime divisor function, and that $\o(1)=0$.

\begin{theorem}\label{np}
For any positive integer $B$ and $FC$ set  of the type \eqref{F},
%For any positive integer $B$,
 \begin{align*}      
      N(B,F)    \le    C\,B2^k\prod_{i=1}^k &\big(1+\frac{1}{4
p_i}\big) \min\big(B \,,\,  \big(\frac{5}{4}\big)^{k  } \big)   
  \cr    & +\ C\, 2^k\prod_{i=1}^k\big(1+ \frac{1}{3p_i}\big)\, \min \big(B\big(\frac{3}{2}\big)^{k  } , 2^k\big) 
     . 
\end{align*}
%\begin{eqnarray*}  
%     N(B,F)&\le &   C\,B2^k\prod_{i=1}^r \big(1+\frac{1}{4
%p_i}\big) \min\big(B \,,\,  \big(\frac{5}{4}\big)^{k  } \big)  . \end{eqnarray*}
Here $C$ is a universal constant.

 Also, 
\begin{eqnarray*} 
     N(B,F)&\ge &  2^{k}B+2^{k}\, B \sum_{ n\in F\atop 1<n\le B}
 \frac{2^{-\o(n)}  }{n}
 . \end{eqnarray*} 
Further,  if $F\subset [1,B]$,  \begin{eqnarray*}
     N(B,F)&\ge& 2^{k}B+2^{k} B \Big(\prod_{j=1}^k\big(1 +\frac{1}{2p_j}\big)\Big).
    \end{eqnarray*} 
 \end{theorem}
In the course of the proof, we show a better but less explicit result (see \eqref{estN(B,F)}), from which it follows that 
when the $p_i$ are all large, then   
\begin{align}      
     N(B,F) \sim B2^k\sum_{ n, m\in F\cap [1,B]}  \frac{1}{2^{\o(m)}2^{\o(n)}(n\vee m)} .
    \end{align}
  See Remark \ref{r1}. 

\vskip 3 pt    The paper is organized as follows. 
In the next section, some preparatory lemmas are established. In Sections \ref{s3} and \ref{s4} we prove Theorems \ref{nf} and \ref{np} respectively.  In
Section \ref{s5}, we conclude  with a remark concerning equation \eqref{e.k+1}, and give 
an elementary proof of an almost optimal upper bound of $N_{k}(B)$. We also suggest  a possible extension of Theorem \ref{np}.

%%%%%%%%%%%%%%%%%%%%%%%%%
 \section{\bf Proof of Proposition \ref{n}.} \label{s2}
%%%%%%%%%%%%%%%%%%%%%%%%%

%\begin{lemma} \label{n}
%We have for any FC set $F$ and any integer $B\ge 1$,
%   \begin{eqnarray*}       N(B,F)&=&\sum_{ n, m\in F\atop \gcd(n,m)=1}\Big( \sum_{  \m, \nu\in F\atop \frac{\nu}{\m}= \frac{m}{n}} 1\Big)\   \Big\lfloor \frac{B}{n\vee m}\Big\rfloor.
  %  \end{eqnarray*}
%      If $F\subset [1,B]$, then
 %  \begin{eqnarray*}
 %   N(B,F)\le   2 B^2  \sum_{n \in F\atop n\neq 1}\frac{1}{n}  +B^2.
%    \end{eqnarray*}
%Further,
 % \begin{eqnarray*}
 %  N(F, F )&=&\sum_{ n, m\in F\atop \gcd(n,m)=1}\Big( \sum_{  n_2, n_4\in F\atop \frac{n_4}{n_2}= \frac{m}{n}} 1\Big)^2  .
%    \end{eqnarray*}
 %    \end{lemma}

    Equation \eqref{e2} means that $n_1n =n_3m$,
where $n_2=dn$, $n_4=dm$, $d=\gcd(n_2,n_4)$ and $\gcd(n,m)=1$. Since  $F$ is   factor closed, we necessarily have that $n,m\in F$.
\vskip 2 pt
 Now given $n,m\in F$ fixed  such that $\gcd(n,m)=1$, the  number of integers  $n_2, n_4\in F$ such that   $n_2=dn$, $n_4=dm$ for some $d\ge 1$,  is  obviously equal to
$$ \sum_{  n_2, n_4\in F\atop \frac{n_4}{n_2}=\frac{m}{n}} 1.$$
Further, the solutions to the equation $n_1n =n_3m$ in the unknowns $1\le n_1\le B$, $1\le n_3\le B$ are trivially $n_1=\l m$, $n_3 = \l n$, with $1\le \l\le \lfloor \frac{B}{n\vee m}\rfloor$ if ${n\vee m}\le B$, and there is no solution otherwise.
\vskip 2 pt

Hence the number of solutions in the unknowns $ n_1,n_3\in[1,B]$,  $n_2, n_4\in F$  verifying $\frac{n_1}{n_3}= \frac{n_4}{n_2}= \frac{m}{n}$, is 
   \begin{eqnarray}\label{e0}\Big( \sum_{  n_2, n_4\in F\atop \frac{n_4}{n_2}=\frac{m}{n}} 1\Big) \Big\lfloor \frac{B}{n\vee m}\Big\rfloor.
     \end{eqnarray}
Note that when $F=[1,B]$, this simplifies and one gets
   $  \big\lfloor \frac{B}{n\vee m}\big\rfloor^2$,  which for $n=m=1$ reduces to $B^2$. Also (see \cite[(4)]{S}),
 \begin{eqnarray*}  N(B)&=&\sum_{ 1\le n,m\le B\atop \gcd(n,m)=1}
  \Big\lfloor \frac{B}{n\vee m}\Big\rfloor^2
  \ =\    B^2 +2\sum_{ 1\le n<m\le B\atop \gcd(n,m)=1} \Big\lfloor \frac{B}{n\vee m}\Big\rfloor^2.
    \end{eqnarray*}
    In our case we get 
  \begin{eqnarray} \label{nbf}
   N(B, F )&=&\sum_{ n, m\in F\atop \gcd(n,m)=1}\Big( \sum_{  n_2, n_4\in F\atop \frac{n_4}{n_2}= \frac{m}{n}} 1\Big)\   \Big\lfloor \frac{B}{n\vee m}\Big\rfloor  .
    \end{eqnarray}
   If $F\subset [1,B]$, we have the obvious bound
$$ \sum_{  n_2, n_4\in F\atop \frac{n_4}{n_2}= \frac{m}{n}} 1\ \le \  \Big\lfloor \frac{B}{n\vee m}\Big\rfloor,$$
and so,
 \begin{eqnarray*}  N(B,F)&\le &\sum_{ n, m\in F\atop \gcd(n,m)=1}  \Big\lfloor \frac{B}{n\vee m}\Big\rfloor^2
 \ =\  B^2 +2\sum_{{ n, m\in F\atop n < m}\atop \gcd(n,m)=1} \Big\lfloor \frac{B}{ m}\Big\rfloor^2.
    \end{eqnarray*}
     Plainly,
  \begin{eqnarray*}
  \sum_{{ n, m\in F\atop n < m}\atop \gcd(n,m)=1} \Big\lfloor \frac{B}{ m}\Big\rfloor^2
  &\le & B^2
  \sum_{m\in F}\frac{1}{m^2}\sum_{{ n\in F\atop n < m}\atop \gcd(n,m)=1}1 \ \le\   B^2
  \sum_{m\in F
  }\frac{\phi(m)}{m^2} \ \le \   B^2
  \sum_{m\in F
  }\frac{1}{m } ,
    \end{eqnarray*}
since $\phi(m)\le m$.    Hence the claimed bound.

Further, 
  \begin{eqnarray} \label{nbf1}
   N(F, F )&=&\sum_{ n, m\in F\atop \gcd(n,m)=1}\Big( \sum_{  n_2, n_4\in F\atop \frac{n_4}{n_2}= \frac{m}{n}} 1\Big)^2  .
    \end{eqnarray}
 
 \vskip 2 pt 
 
 Next we prove the lower bound for $N(B,F)$ 
 % (give an equation number here)
  in Proposition \ref{n}.
%\begin{proposition} Let $F\subset[1,B]$ be an FC set. Then,
%\begin{eqnarray*} N(B,F) \ge 2 \sum_{\m,\nu\in F\atop } \gcd(\m,\nu).
% \end{eqnarray*}
%\end{proposition}
Let $(\nu,\m)\in F^2$ and write $\m=\gcd(\nu,\m)m$, $\nu=\gcd(\nu,\m)n$ with  $\gcd(n,m)=1$.  
  Associate to $(\nu,\m)$ the  set  $c(\nu,\m)=\big\{ \big((dm), \nu, (dn) ,\m\big),1\le d\le \gcd(\nu,\m)\big\}$. These quadruples provide $\gcd(\nu,\m)$ solutions of the restricted equation \eqref{e2}, since obviously   $ dm \nu\,=\, dn \m$ and $d\max( m ,n)\le \max(\nu,\m)\le B$.
\vskip 2 pt
 Naturally if  $(\m', \nu')\in F^2$ and  $(\m', \nu')\neq (\m, \nu)$, then $c(\nu',\m')\cap c(\nu,\mu)=\emptyset$.
Thus,
%  by symmetry,
\begin{eqnarray*} N(B,F) \ \ge \  \sum_{\m,\nu\in F\atop } \gcd(\m,\nu).
 \end{eqnarray*}

 \begin{remark}  Let $F$  as in \eqref{F}. Then,
\begin{equation}\sum_{\m,\nu\in F\atop } \gcd(\m,\nu)\ =\  3^{k}\prod_{i=1}^k\Big(1+ \frac{1}{3}\,p_i\Big).
\end{equation} 
 Further,
  \begin{eqnarray}      N(F,F)&=& 6^k
 .  \end{eqnarray}
   Indeed, let $a\in F$, $n,m\in F$ with $\gcd(n,m)=a$. Thus $m=m_1a$, $n=n_1a$ with $\gcd(n_1,m_1)=1$. Because of the choice of $F$, we moreover have that $\gcd(n_1,a)=1=\gcd(m_1,a)$. Thus
\begin{eqnarray*}
  \sum_{\m,\nu\in F\atop } \gcd(\m,\nu)&=&\sum_{a\in F}a\sum_{m_1\in F\atop \gcd(m_1,a)=1}\sum_{{n_1\in F\atop \gcd(n_1,a)=1}\atop \gcd(n_1,m_1)=1}1\ =\ \sum_{a\in F}a\sum_{m_1\in F\atop
\gcd(m_1,a)=1}2^{k-\o(a)-\o(m_1)}
  \cr &=& 2^{k}\sum_{a\in F}a2^{-\o(a)}\sum_{j=0}^{k-\o(a)}C_{k-\o(a)}^j2^{-j}\ =\ 2^{k}\sum_{a\in F}a\, 2^{-\o(a)}\Big(\frac{3}{2}\Big)^{k-\o(a)}
  \cr &=&3^{k}\sum_{a\in F}a\Big(\frac{1}{3}\Big)^{\o(a)}\ =\  3^{k}\prod_{i=1}^k\Big(1+ \frac{1}{3}\,p_i\Big).
    \end{eqnarray*}
  
Further
\begin{eqnarray*}  \label{estexactFF}     N(F,F)&=&\sum_{\gcd(n,m)=1\atop n, m\in F}\Big( \sum_{  n_2, n_4\in F\atop \frac{n_4}{n_2}= \frac{m}{n}} 1\Big)^2\cr &=&\sum_{\gcd(n,m)=1\atop n, m\in F}   4^{k-\o(m)-\o(n)}\ =\ 5^{k}\sum_{m\in F}5^{-\o(m)}\ =\ 6^k
 .  \end{eqnarray*}
  \end{remark}

%%%%%%%%%%%%%%%%%%%%%%%%%%%%%
\section{\bf Proof of Theorem \ref{nf}.} \label{s3}
%%%%%%%%%%%%%%%%%%%%%%%%%%%%%

   By \eqref{nbf}, we have
  \begin{eqnarray*}
    N(B, F )&=&\#( F)\, B+2\sum_{\gcd(n,m)=1,n< m\atop   n, m\in F}\Big\lfloor\frac{B}{ m}\Big\rfloor \sum_{  n_2, n_4\in F\atop \frac{n_4}{n_2}= \frac{m}{n}} 1\ :=\ \Big(\# F\Big)B+2\mathcal B_0.
     \end{eqnarray*}
   We get
   \begin{eqnarray*}
    \mathcal B_0&=&\Big(\sum_{1< m\le  B^{\a}\atop    m\in F}+\sum_{ B^{\a} <m\le B^{2\a}\atop    m\in F}\Big)\Big\lfloor\frac{B}{ m}\Big\rfloor\sum_{\gcd(n,m)=1\atop n<m, n\in F}\sum_{  n_2, n_4\in F\atop \frac{n_4}{n_2}= \frac{m}{n}} 1\ :=\  \mathcal B_{01}+\mathcal B_{02}.
     \end{eqnarray*}
    Let us consider the first sum above, $\mathcal B_{01}$.
   \begin{eqnarray*}
    \mathcal B_{01}&=&\sum_{1< m\le  B^{\a}}\Big\lfloor\frac{B}{ m}\Big\rfloor \sum_{\gcd(n,m)=1, n<m} \Big\lfloor B^{\a}\Big\rfloor\ \le B^{1+\a}\sum_{ m\le B^{\a}} \frac{\phi(m)}{ m}\\
    &=& \frac6{\pi^2}\,  B^{1+2\a}+\mathcal O\Big(B^{1+\a}\log B\Big),
     \end{eqnarray*}   where    $ \sum_{n\le x}\frac {\phi(n)}n=\frac6{\pi^2}x+\mathcal O(\log x)$ is used.

    Now let us estimate $\mathcal B_{02}$.
    \begin{eqnarray*}
    \mathcal B_{02}&\le&\sum_{ B^{\a} <m\le B^{2\a}\atop    m\in F}\Big\lfloor\frac{B}{ m}\Big\rfloor\frac {B^{2\a}}m\sum_{n<m, (n, m)=1,\atop n\in F}1\ \le \  B^{1+2\a}\sum_{  B^{\a} <m\le B^{2\a}\atop    m\in F}\frac {\phi(m)}{m^2}\\
                   &\le& B^{1+2\a}\sum_{  B^{\a} <m\le B^{2\a}\atop    m\in F}\frac 1{m}.
   \end{eqnarray*}  Putting together the above estimates, we have 
\begin{eqnarray*}
    N(B, F )&\le &\Big(1+\frac {12}{\pi^2}+2\sum_{  B^{\a} <m\le B^{2\a}\atop    m\in F}\frac 1{m}\Big)B^{1+2\a}.
     \end{eqnarray*}

%%%%%%%%%%%%%%%%%%%%%%%%%%%%%
\section{Proof of Theorem \ref{np}.} \label{s4} 
%%%%%%%%%%%%%%%%%%%%%%%%%%%%%
 
  By Lemma \ref{n}, 
    \begin{eqnarray}  \label{estexact}    
     N(B,F)&=&\sum_{ n, m\in F\atop \gcd(n,m)=1}\Big( \sum_{  n_2, n_4\in F\atop \frac{n_4}{n_2}= \frac{m}{n}} 1\Big)\  
 \Big\lfloor \frac{B}{n\vee m}\Big\rfloor
 . \end{eqnarray}

   Let $m\in F$.      Given an integer $a\ge 2$, we define $\langle a\rangle= \{p: p|a\}$.  Recall that $\o(n)$ denotes the prime divisor function, and $\o(1)=0$. Consider  for $n,m\in F$ with $\gcd(n,m)=1$, the sum
    $$ \sum_{  n_2, n_4\in F\atop \frac{n_4}{n_2}= \frac{m}{n}} 1.$$
 Then $ \frac{n_4}{n_2}= \frac{m}{n}$ gives rise to solutions
 $n_4=\l m$,  $n_2= \l n$.    As  $\l$, $\l m$, $\l n\in F$,
 it follows by definition of $F$ that $\langle \l\rangle\cap \langle m\rangle=\langle \l\rangle\cap \langle n\rangle=\emptyset$. Otherwise, if for
instance some $p_j$ verifies $p_j\in \langle \l\rangle\cap \langle m\rangle$, then $p_j^2|\l m=n_4$, which is impossible. Thus $\langle \l\rangle\subset
\{p_1, \ldots, p_k\}-  \langle m\rangle- \langle n\rangle$. Conversely any subset $A$ of it provides  a suitable $\l$ with $\langle \l\rangle=A$.  And
so  we have
\begin{eqnarray*}   \sum_{  n_2, n_4\in F\atop \frac{n_4}{n_2}= \frac{m}{n}} 1 
&=& 2^{k-\o(m)-\o(n)}.
 \end{eqnarray*}
 for all $n,m\in F$ with $\gcd(n,m)=1$. 
 
  \vskip 3 pt Inserting this into \eqref{estexact}  we get, 
    \begin{eqnarray}  \label{estexacta}    
     N(B,F)
   &=&  \sum_{ n, m\in F\atop \gcd(n,m)=1}2^{k-\o(m)-\o(n)} \Big\lfloor \frac{B}{n\vee m}\Big\rfloor. 
\end{eqnarray}
 
First consider the lower bound. We have
\begin{eqnarray*} 
     N(B,F)&\ge &2^{k}\sum_{ n\in F\atop n\le B}2^{-\o(n)} \  
 \Big\lfloor \frac{B}{n}\Big\rfloor  . \end{eqnarray*}
 Observe that for  $X\neq 0$,
  \begin{eqnarray}\label{X}
    \sum_{  d\in F} \frac{1}{d}X^{- \o(d)}&=&\prod_{j=1}^k\Big(1 +\frac{1}{Xp_j}\Big) .
    \end{eqnarray}  
  Thus  if $F\subset [1,B]$,  \begin{eqnarray*}
     N(B,F)&\ge& 2^{k} B \Big(\prod_{j=1}^k\big(1 +\frac{1}{2p_j}\big)\Big),
    \end{eqnarray*} 
which proves the lower bound. 
    
    \vskip 3 pt Next consider the upper bound for $N(B,F)$.
 We have 
 \begin{eqnarray}  \label{estexacta1}    
     N(B,F)
  &=& B\, 2^{k}\sum_{{ n, m\in F\atop \gcd(n,m)=1}\atop (m\vee n)\le B}  \frac{1}{2^{\o(m)}2^{\o(n)}(n\vee m)}
 \cr & & \   +\  2^k\,\mathcal O\Big(
\sum_{{ n, m\in F\atop \gcd(n,m)=1}\atop (m\vee n)\le B} 
\frac{1}{2^{\o(m)}\,2^{\o(n)} }\Big). 
\end{eqnarray}
   
     Put 
\begin{eqnarray*} Y=  \sum_{{ n, m\in F\atop\gcd(n,m)=1}\atop (m\vee n)\le B}  \frac{1}{2^{\o(m)}2^{\o(n)}(n\vee m)} , \qq \qq  Y_0=\sum_{{ n, m\in F\atop \gcd(n,m)=1}\atop (m\vee n)\le B} 
\frac{1}{2^{\o(m)}\,2^{\o(n)} } .
 \end{eqnarray*}

 We thus start with the formula
 \begin{eqnarray}  \label{estexact0}    
     N(B,F)&=&  B\, 2^{k}\, Y + 2^k\,\mathcal O (
Y_0 ). \end{eqnarray}
  We note that 
 \begin{eqnarray*} Y&=& 1  + 2\,\sum_{   m\in F   \atop   m\le B}\frac{1}{m\,2^{\o(m)}  } \sum_{{ n \in F\atop\gcd(n,m)=1}  \atop n< m } 
\frac{1}{ 2^{\o(n)} }  .\end{eqnarray*}

  The  presence of the order relation 
  \lq\lq$<\,$\rq\rq on $F$,  a set of squarefree numbers, in the summation index, makes that sum  not easy to manipulate. We cannot   bound $Y$ directly
and will  thus proceed   differently.
    We first note the relation 
   \begin{eqnarray}\label{vee} \frac{1}{n\vee m}=\frac{1}{\sqrt n}\, \frac{1}{\sqrt m}\, \Big(\frac{n\wedge m}{n\vee m} \Big)^{1/2}.
 \end{eqnarray} 
 \vskip 3 pt
 Now as $ e^{-|\t|}= \int_\R  e^{i\t t}\frac{\dd t}{\pi( t^2+1)}$,
it follows that
\begin{eqnarray}  \Big(\frac{n }{m}\Big)^s=\int_\R
 \frac{1}{  n^{- ist}m^{ist}}\frac{\dd t}{\pi( t^2+1)} \qq \qquad (m\ge n).
\end{eqnarray}
Take $s=1/2$. We get
$$\frac{1}{n\vee m}=\frac{1}{\sqrt n}\frac{1}{\sqrt m}\int_\R
 \frac{1}{  n^{- it/2}m^{i t/2}}\frac{\dd t}{\pi( t^2+1)}.$$
Recall that $\m$ denotes the M\"obius function and that
\begin{eqnarray} \label{sp} \sum_{d|n}\m(d)=\d(n):=\begin{cases}1\quad & {\rm if}\ n=1,\cr
0\quad & {\rm if}\ n \not=1. \end{cases} \end{eqnarray}
Putting this together, we have
\begin{eqnarray}    Y&=&   \sum_{{ n, m\in F\atop \gcd(n,m)=1}\atop (m\vee n)\le B}   \frac{1}{2^{ \o(n)}\sqrt n}\frac{1}{2^{ \o(m) }\sqrt m}\int_\R
 \frac{1}{  n^{- it/2}m^{i t/2}}\frac{\dd t}{\pi( t^2+1)}
 \cr &=&   \sum_{d\in F} \m(d)\sum_{ { n, m\in F\atop d|n, d|m}\atop (m\vee n)\le B}     \frac{1}{2^{ \o(n)}\sqrt n}\frac{1}{2^{ \o(m) }\sqrt m}\int_\R
 \frac{1}{  n^{- it/2}m^{i t/2}}\frac{\dd t}{\pi( t^2+1)}
\cr &=&   \sum_{d\in F}\m(d) \int_\R\Big|\sum_{ { n\in F\atop d|n}\atop  n\le B}      \frac{1}{2^{ \o(n)}\sqrt n}  \frac{1}{  n^{  it/2}
}\Big|^2\frac{\dd t}{\pi( t^2+1)}
 .
    \end{eqnarray}

By the very definition of $F$, if $ n \in F$ and $d|n $, then $n=\nu d$ and $\langle \nu\rangle \subset \{p_1, \ldots, p_k\}-  \langle d\rangle $. 
Thus
$$\sum_{ { n\in F\atop d|n}\atop  n\le B}      \frac{1}{2^{ \o(n)}\sqrt n}  \frac{1}{  n^{  it/2} }=\frac{1}{2^{ \o(d)}  d^{   (1+it)/2}}\sum_{ { \nu \in F\atop \gcd(\nu , d)=1}\atop \nu\le B/d }     \frac{1}{2^{ \o(\nu)}\sqrt \nu}  \frac{1}{  \nu^{  it/2} }.$$
So that,
\begin{eqnarray*}    Y
&=&  \int_\R\Big|\sum_{ { \nu \in F}\atop \nu\le B}     \frac{1}{2^{ \o(\nu)}\sqrt \nu}  \frac{1}{  \nu^{  it/2} }\Big|^2\frac{\dd t}{\pi( t^2+1)}
\cr &&  +\  \sum_{d\in F\atop d\neq 1}\frac{\m(d)}{2^{ \o(d)}  d} \ \int_\R\Big|\sum_{ { \nu \in F\atop \gcd(\nu , d)=1}\atop \nu\le B/d }     \frac{1}{2^{ \o(\nu)}\sqrt \nu}  \frac{1}{  \nu^{  it/2} }\Big|^2\frac{\dd t}{\pi( t^2+1)}.
  \end{eqnarray*}
We notice that 
$$\int_\R\Big|\sum_{ { \nu \in F}\atop \nu\le B}     \frac{1}{2^{ \o(\nu)}\sqrt \nu}  \frac{1}{  \nu^{  it/2} }\Big|^2\frac{\dd t}{\pi( t^2+1)}=\sum_{
n, m\in F\cap [1,B]}   \frac{1}{2^{\o(m)}2^{\o(n)}(n\vee m)}.$$ By Lemma
2.4 in \cite{W2},
  \begin{eqnarray*} \int_\R\Big|\sum_{n=1}^Nx_n n^{ i s t}\Big|^2
 \frac{\dd t}{\pi( t^2+1)} \, =\,
  \sum_{j=1}^N (j^{2s}-(j-1)^{2s})\Big|\sum_{\m=j}^N  \frac{x_\mu}{\mu ^s} \Big|^2
\end{eqnarray*}
for any real $s\ge 0$. In our case $s=1/2$. Now 
by Lemma 2.5 in \cite{W2},
  \begin{eqnarray*} \sum_{j=1}^N (j^{2s}-(j-1)^{2s})\Big|\sum_{\m=j}^N  \frac{x_\mu}{\mu ^s} \Big|^2&\le &  \begin{cases}C_s\sum_{\m=1}^N
 {|x_\m|^2} \m^{  3/2-2s} &\quad {\rm if}\  0<s<1/4, \cr
C\sum_{\m=1}^N
 |x_\m|^2  \m \log \m&\quad {\rm if}\  s= 1/4,\cr
C_s\sum_{\m=1}^N
 |x_\m|^2  \m &\quad {\rm if}\  s> 1/4   ,
\end{cases}\end{eqnarray*}
for any $s>0$ and complex numbers   $x_j$, $j=1,\ldots , N$.
\vskip 2 pt
 Therefore,
\vskip 2 pt
-- If $d=1$, we have
  \begin{eqnarray*}  \int_\R\Big|\sum_{  \nu \in F \atop \nu\le B}     \frac{1}{2^{ \o(\nu)}\sqrt \nu}  \frac{1}{  \nu^{  it/2} }\Big|^2\frac{\dd t}{\pi( t^2+1)}
&\le &C \sum_{  \nu \in F\atop \nu\le B  }     \frac{1}{2^{ 2\o(\nu)}  \nu}\ \nu\ =\ \sum_{  \nu \in F \atop \nu\le B }     \frac{1}{4^{  \o(\nu)} }.\end{eqnarray*}
We note that 
$$\sum_{  \nu \in F \atop \nu\le B }     \frac{1}{4^{  \o(\nu)} }\le B\sum_{  \nu \in F }     \frac{1}{\nu\, 4^{  \o(\nu)} }=B
\sum_{p_{i_1},\ldots,p_{i_r}\atop 1\le r\le k} \frac{1}{p_{i_1}\ldots p_{i_r} 4^{ k} }=B\prod_{i=1}^k \Big(1+\frac{1}{4 p_i}\Big) $$
Further,
 $$\sum_{  \nu \in F }     \frac{1}{4^{  \o(\nu)}   } =\sum_{y=0}^{k }\sum_{\nu \in F, \o(\nu)=y}4^{-y}= \sum_{y=0}^{k
}C^y_k4^{-y}=\Big(\frac{5}{4}\Big)^{k  }. $$

 Thus 
\begin{align}\label{d=1} \int_\R\Big|\sum_{  \nu \in F \atop \nu\le B}     \frac{1}{2^{ \o(\nu)}\sqrt \nu}  \frac{1}{  \nu^{  it/2} }\Big|^2\frac{\dd t}{\pi( t^2+1)}
   \le    \min\Big(B\prod_{i=1}^r \Big(1+\frac{1}{4 p_i}\Big)\,,\,  \Big(\frac{5}{4}\Big)^{k  } \Big).
    \end{align}
 
-- If $d>1$, $d\in F$, then similarly,
 \begin{eqnarray*} & &\int_\R\Big|\sum_{{  n \in F\atop d|n }\atop n\le B}     \frac{1}{2^{ \o(n)}\sqrt n}  \frac{1}{  n^{  it/2} }\Big|^2\frac{\dd
t}{\pi( t^2+1)}
\cr &=& \frac{1}{2^{2 \o(d)}  d }\int_\R\Big|\sum_{ { \nu \in F\atop \gcd(\nu , d)=1}\atop \nu\le B/d }     \frac{1}{2^{ \o(\nu)}\sqrt \nu} 
\frac{1}{  \nu^{  it/2} }\Big|^2\frac{\dd t}{\pi( t^2+1)}
 \cr &\le & \frac{C}{2^{2 \o(d)}  d } \sum_{ { \nu \in F\atop \gcd(\nu , d)=1}\atop \nu\le B/d }       \frac{1}{2^{ 2\o(\nu)}\nu  }\cdot  \nu
  \ =\ \frac{C}{2^{2 \o(d)}  d } \sum_{ { \nu \in F\atop \gcd(\nu , d)=1}\atop \nu\le B/d }    \frac{1}{4^{ \o(\nu)}   }.
 \end{eqnarray*}
We also note that 
$$\sum_{ { \nu \in F\atop \gcd(\nu , d)=1}\atop \nu\le B/d } \frac{1}{4^{ \o(\nu)}   }\le \frac{B}{d}\sum_{ { \nu \in F\atop \gcd(\nu , d)=1} }    \frac{1}{\nu 4^{ \o(\nu)}   }= \frac{B}{d}\prod_{1\le i\le k\atop p_i\not|\, d} \Big(1+\frac{1}{4 p_i}\Big).$$

Next,
\begin{eqnarray*} \sum_{ { \nu \in F\atop \gcd(\nu , d)=1} }    \frac{1}{4^{ \o(\nu)}   }&=&\sum_{y=0}^{k-\o(d)} \sum_{\nu\in F, \o(\nu)=y}4^{-y} \ =\  \sum_{y=0}^{k-\o(d) }C^y_{k-\o(d)}4^{-y} \ =\ \Big(\frac{5}{4}\Big)^{k -\o(d) }
\cr &=&  \Big(\frac{5}{4}\Big)^k \Big(\frac 45\Big)^{  \o(d)}    .
\end{eqnarray*}
Therefore, 
\begin{align*} \int_\R\Big|\sum_{{  n \in F\atop d|n }\atop \nu\le B}     \frac{1}{2^{ \o(n)}\sqrt n}  \frac{1}{  n^{  it/2} }\Big|^2&\frac{\dd
t}{\pi( t^2+1)}
\cr &\le  \ \frac{C}{4^{ \o(d)}  d } \min\bigg(\frac{B}{d}\prod_{1\le i\le k\atop p_i\not|\, d} \Big(1+\frac{1}{4 p_i}\Big),\Big(\frac{5}{4}\Big)^k \Big(\frac 45\Big)^{  \o(d)}\bigg).
 \end{align*}

Consequently,
\begin{align*}   \sum_{d\in F\atop
d\neq 1}|\m(d) | \int_\R&\Big|\sum_{ { n\in F\atop d|n}\atop  n\le B}      \frac{1}{2^{ \o(n)}\sqrt n}  \frac{1}{  n^{  it/2}
}\Big|^2\frac{\dd t}{\pi( t^2+1)}
\cr 
 \le & \sum_{d\in F\atop
d\neq 1}\frac{C}{4^{ \o(d)}  d } \min\bigg(\frac{B}{d}\prod_{1\le i\le k\atop p_i\not|\, d} \Big(1+\frac{1}{4 p_i}\Big),\Big(\frac{5}{4}\Big)^k
 \Big(\frac 45\Big)^{  \o(d)}\bigg).
 \end{align*}
On the one hand, 

\begin{eqnarray*}  \sum_{d\in F\atop
d\neq 1}\frac{1}{4^{ \o(d)}  d } \Big(\frac{5}{4}\Big)^k \Big(\frac 45\Big)^{  \o(d)}&=& \Big(\frac{5}{4}\Big)^k\sum_{d\in F\atop
d\neq 1}\frac{1}{5^{ \o(d)}  d } \ \le\  \Big(\frac{5}{4}\Big)^k\prod_{i=1}^k\Big(1+\frac{1}{5p_i}\Big).
    \end{eqnarray*}
On the other hand, using the definition of $F$,

\begin{eqnarray*}
& &B\sum_{d\in F\atop
d\neq 1}\frac{1}{4^{ \o(d)}  d^2}\prod_{1\le i\le k\atop p_i\not|\, d} \Big(1+\frac{1}{4 p_i}\Big)
 \cr&=& C\, B\prod_{i=1}^k\Big(1+\frac{1}{4p_i}\Big)\sum_{d\in F\atop
d\neq 1}\frac{C}{4^{ \o(d)}  d^2\prod_{1\le i\le k\atop p_i|\, d} (1+{1}/{4 p_i})}
\cr &=& C\, B\prod_{i=1}^k\Big(1+\frac{1}{4p_i}\Big)\sum_{d\in F\atop
d\neq 1}\prod_{1\le i\le k\atop p_i|\, d}\frac{1}{ (4p_i^2+4 p_i)}
\cr&= &  C\, B\prod_{i=1}^k\Big(1+\frac{1}{4p_i}\Big)\bigg[\prod_{i=1}^k\Big(1+\frac{1}{ (4p_i^2+4 p_i)}\Big)-1\bigg]
\cr&= &  C\, B\prod_{i=1}^k\Big(1+\frac{1}{4p_i}\Big)\e(F),\end{eqnarray*}
where we put
\begin{equation} \label{e(F)} \e(F)= \prod_{i=1}^k\Big(1+\frac{1}{ 4p_i( p_i +  1)}\Big)-1.
\end{equation} 
Thus
\begin{eqnarray*}& &   \sum_{d\in F\atop
d\neq 1}|\m(d) |\int_\R\Big|\sum_{ { n\in F\atop d|n}\atop  n\le B}      \frac{1}{2^{ \o(n)}\sqrt n}  \frac{1}{  n^{  it/2}
}\Big|^2\frac{\dd t}{\pi( t^2+1)}
\cr 
& \le & C\,\min\bigg(  B\prod_{i=1}^k\Big(1+\frac{1}{4p_i}\Big)\e(F), \Big(\frac{5}{4}\Big)^k\prod_{i=1}^k\Big(1+\frac{1}{5p_i}\Big)   \bigg),
    \end{eqnarray*}
whence,
  \begin{align*}\Big| Y- \sum_{ n, m\in F\cap [1,B]}&  \frac{1}{2^{\o(m)}2^{\o(n)}(n\vee m)}\Big|
\cr &\le  C\,\prod_{i=1}^k\big(1+\frac{1}{4p_i}\big)\min\Big(  B\,\e(F), \Big(\frac{5}{4}\Big)^k\   \Big) .
 \end{align*}
Using now \eqref{d=1}  
 we obtain the bound,
\begin{eqnarray*}  
  Y&\le &     \min\big(B\prod_{i=1}^k \big(1+\frac{1}{4 p_i}\big)\,,\,  \big(\frac{5}{4}\big)^{k  } \big)+ 
C\,\prod_{i=1}^k\big(1+\frac{1}{4p_i}\big)\min\big(  B\,\e(F), \big(\frac{5}{4}\big)^k\   \big)
\cr &\le &   C\, \prod_{i=1}^k \big(1+\frac{1}{4 p_i}\big) \min\big(B \,,    \big(\frac{5}{4}\big)^{k  } \big)  , 
    \end{eqnarray*} 
since $\e(F)\le C$ uniformly in $F$. Now plainly,
  \begin{eqnarray}  \label{Y0}   \sum_{(n,m)=1\atop n, m\in F}  \frac{1}{2^{\o(m)+\o(n)}}
  &=&  \sum_{ m\in F} \frac{1}{2^{\o(m)}}  \sum_{ n\in F\atop (n,m)=1}  \frac{1}{2^{\o(n)}}  
\ =\    \sum_{ m\in F} \frac{1}{2^{\o(m)}}\Big(\frac{3}{2}\Big)^{k-\o(m)  }
  \cr &=&  \Big(\frac{3}{2}\Big)^{k  }\sum_{ m\in F} \frac{1}{3^{\o(m)}} \ =\  \Big(\frac{3}{2}\Big)^{k  }\Big(\frac{4}{3}\Big)^{k  }\ =\ 2^{k  }. 
 \end{eqnarray}
Also
  \begin{eqnarray*} \sum_{{ n, m\in F\atop \gcd(n,m)=1}\atop (m\vee n)\le B} 
\frac{1}{2^{\o(m)}\,2^{\o(n)} }&\le & B\sum_{{ n, m\in F\atop \gcd(n,m)=1}  } \frac{1}{m 2^{\o(m)}\,2^{\o(n)} }\ = \ B\sum_{ m\in F} \frac{1}{m2^{\o(m)}} 
\sum_{ n\in F\atop (n,m)=1}  \frac{1}{2^{\o(n)}}
\cr &=& B\sum_{ m\in F} \frac{1}{m2^{\o(m)}} 
\Big(\frac{3}{2}\Big)^{k-\o(m)  }\ =\ B\Big(\frac{3}{2}\Big)^{k  }\sum_{ m\in F} \frac{1}{m3^{\o(m)}}
\cr &=& B\Big(\frac{3}{2}\Big)^{k  }\prod_{i=1}^k\Big(1+ \frac{1}{3p_i}\Big).
 \end{eqnarray*}
Thus
\begin{eqnarray*} Y_0&\le & \min \Big(B\big(\frac{3}{2}\big)^{k  }\prod_{i=1}^k\big(1+ \frac{1}{3p_i}\big), 2^k\Big) .
 \end{eqnarray*}

As by  \eqref{estexact}, $ N(B,F) =  B 2^{k}\, Y + 2^k\,\mathcal O (
Y_0 )$, we get,
\begin{eqnarray}  \label{estN(B,F)}    
  & & \Big|  N(B,F) -B2^k\sum_{ n, m\in F\cap [1,B]}  \frac{1}{2^{\o(m)}2^{\o(n)}(n\vee m)}\Big|
  \cr  &\le & \ 2^k\,\mathcal O \Big(
\min\big(B\prod_{i=1}^k \big(1+\frac{1}{4 p_i}\big)(1+\e(F))\,,\,   \big(\frac{5}{4}\big)^{k  } \big) \Big )
\cr &  & + \ C\, B 2^{k}
 \prod_{i=1}^k\big(1+\frac{1}{4p_i}\big)\min\big(  B\,\e(F), \big(\frac{5}{4}\big)^k\   \big).
 \end{eqnarray}
 Indeed,
 \begin{eqnarray*}     
 & &  \Big|  N(B,F) -B2^k\sum_{ n, m\in F\cap [1,B]}  \frac{1}{2^{\o(m)}2^{\o(n)}(n\vee m)}\Big|
 \cr & \le & \Big|  N(B,F) -B 2^{k} \,Y\Big| + B 2^{k}\Big| Y- \sum_{ n, m\in F\cap [1,B]}   \frac{1}{2^{\o(m)}2^{\o(n)}(n\vee m)}\Big|
\cr &\le & 2^k\,\mathcal O (
Y_0 ) + C B 2^{k}  \,\prod_{i=1}^k\big(1+\frac{1}{4p_i}\big)\min\Big(  B\,\e(F), \Big(\frac{5}{4}\Big)^k\   \Big)
\cr& \le & 2^k\,\mathcal O \Big(\min \Big(B\big(\frac{3}{2}\big)^{k  }\prod_{i=1}^k\big(1+ \frac{1}{3p_i}\big), 2^k\Big) \Big )
\cr &  & + C  B 2^{k}
 \prod_{i=1}^k\big(1+\frac{1}{4p_i}\big)\min\big(  B\,\e(F), \big(\frac{5}{4}\big)^k\   \big)
  . 
\end{eqnarray*}
 
Also
 
 \begin{eqnarray}  \label{estN(B,F)1}    
      N(B,F)&\le & B2^k \min\Big(B\prod_{i=1}^r \Big(1+\frac{1}{4
p_i}\Big)\,,\,  \Big(\frac{5}{4}\Big)^{k  } \Big)  +
  \cr  & & +\ 2^k\,\mathcal O \Big(\min \Big(B\big(\frac{3}{2}\big)^{k  }\prod_{i=1}^k\big(1+ \frac{1}{3p_i}\big), 2^k\Big)\Big )
\cr &  & +\ C\, B 2^{k}
 \prod_{i=1}^k\big(1+\frac{1}{4p_i}\big)\min\big(  B\,\e(F), \big(\frac{5}{4}\big)^k\   \big)
 \cr &\le & C\,B2^k\prod_{i=1}^k \big(1+\frac{1}{4
p_i}\big) \min\big(B \,,\,  \big(\frac{5}{4}\big)^{k  } \big)  +
  \cr  & & +\ C\, 2^k\prod_{i=1}^k\big(1+ \frac{1}{3p_i}\big)\, \min \Big(B\big(\frac{3}{2}\big)^{k  } , 2^k\Big) 
%\cr &\le & C\,B2^k\prod_{i=1}^r \big(1+\frac{1}{4
%p_i}\big) \min\big(B \,,\,  \big(\frac{5}{4}\big)^{k  } \big)  
    . 
\end{eqnarray}

\begin{remark}\label{r1} When the $p_i$ are all large, then $\e(F) $ becomes small and we see with \eqref{estN(B,F)} that 
\begin{align}      
     N(B,F) \sim B2^k\sum_{ n, m\in F\cap [1,B]}  \frac{1}{2^{\o(m)}2^{\o(n)}(n\vee m)} .
    \end{align}
  \end{remark}

\begin{remark}By Weierstrass' inequality, if $0<a_k<1$ and $\sum_{k=1}^n a_k<1$, then
 $$1+\sum_{k=1}^n a_k <\prod_{k=1}^n \big(1+a_k\big)<\frac{1}{1-\sum_{k=1}^n a_k}.$$
See Mitrinovi\'c \cite[3.2.37(3)]{M}. Thus if $\sum_{i=1}^k\frac{1}{ p_i^2+ p_i}\le 2$, 
  \begin{eqnarray*} 
\e(F)= \prod_{i=1}^k\Big(1+\frac{1}{ (4p_i^2+4 p_i)}\Big)-1\le  \frac{ \sum_{i=1}^k\frac{1}{ (4p_i^2+4 p_i)}}{1-  \sum_{i=1}^k\frac{1}{ (4p_i^2+4 p_i)}}\
\le
\ 2 \sum_{i=1}^k\frac{1}{ p_i^2+ p_i}.
   \end{eqnarray*}
 \end{remark}

    %%%%%%%%%%%%%%%%%%%%%%%%%%%%%
 \section{Concluding Remarks.}\label{s6}
 %%%%%%%%%%%%%%%%%%%%%%%%%%%%%

%%%%%%%%%%%%%%%%%%%%%%%%%%%%%
 \subsection{A remark concerning Equation \ref{e.k+1}.}\label{s5}
 %%%%%%%%%%%%%%%%%%%%%%%%%%%%%

   Granville and Soundarajan (unpublished) proved using contour
integral representation the following estimate
 \begin{equation}\label{e.k}N_k(B) \sim  c(k) B^{k+1} (\log B)^{k^2},\end{equation}
 where the constant $c(k)$ depends on $k$ only. 
We show here  that the following almost optimal upper bound
 \begin{equation}\label{t1a}
N_{k}(B) \ll_k   B^{k+1}\big(\log B\big)^{k^2+2k-2}, \qq\qq (k\ge 1), \end{equation}
can be proved quite elementarily. It is an interesting  question to know  whether the  approach we propose can be used to remove the extra term $2k-2$.

 Let   $d_k(n)$ denote the Piltz divisor function   counting the number of ways to write $n$ as a product of $k$ factors.
We will use the fact that   $d_k$ is sub-multiplicative: $ d_k(nm)\le d_k(n)d_k(m)$,  for all $n,m\ge 1$. 
  This follows from the formula (\cite{C}, p. 40)
$$d_k(n)= \prod_p   C_{v_p(n)+k-1}^{v_p(n)}= \prod_p \prod_{j=1}^{k-1} \Big(\frac{v_p(n)+j}{j}\Big),$$
where  $v_p(n)$ is the $p$-valuation of $n$, i.e. $p^{v_p(n)}||n$ and $v_p(1)\equiv 0$.

\begin{proof}[Proof of \eqref{t1a}]  Applying Lemma \ref{n}  with $F=\mathcal M_k(B)$ gives,
  \begin{eqnarray*}  N_{k}(B)&=&\sum_{\gcd(n,m)=1\atop 1\le n, m\le B}\,  \Big\lfloor \frac{B}{n\vee m}\Big\rfloor\,\Big( \sum_{  m_1, m_2\in \mathcal M_k(B)\atop \frac{m_2}{m_1}= \frac{n}{m}} 1\Big)
   .
    \end{eqnarray*}
 Thus
\begin{eqnarray*}  N_{k}(B)&=&B\#(\mathcal M_k(B)) + 2\sum_{\gcd(n,m)=1\atop 1\le n< m\le B}\,  \Big\lfloor \frac{B}{ m}\Big\rfloor\,\Big( \sum_{  m_1, m_2\in \mathcal M_k(B)\atop \frac{m_2}{m_1}= \frac{n}{m}} 1\Big)
 \cr & :=& B\#(\mathcal M_k(B)) + 2\mathcal B_k .
    \end{eqnarray*}
We note that $\frac{m_2}{m_1}= \frac{n}{m}$ for some $m_1, m_2\in \mathcal M_k(B)$, means that $mm_2=c=nm_1$, $nm|c$ and $nm\le c\le nB^k$.
Thus
\begin{eqnarray}\label{mcdk}
 \sum_{  m_1, m_2\in \mathcal M_k(B)\atop \frac{m_2}{m_1}= \frac{n}{m}} 1
&= &\sum_{{nm_{1,1}\ldots m_{1,k}=mm_{2,1}\ldots m_{2,k}\atop 1\le m_{1,i}, m_{2,j}\le B}\atop 1\le i,j\le k} 1
\cr &=& \sum_{c=nm\atop nm|c }^{nB^k}
                              \Big(
                              \sum_{{nm_{1,1}\ldots m_{1,k}=c\atop 1\le m_{1,i}\le  B}\atop 1\le i\le k }1\Big) \Big(
                              \sum_{
                              {mm_{2,1}\ldots m_{2,k}=c\atop 1\le m_{2,i}\le  B}\atop 1\le i\le k}1\Big)
                           \cr 
    (c=jmn)\qq                       &=& \sum_{j=1}^{[   {B^k}/m ]}
                              \Big(
                              \sum_{{m_{1,1}\ldots m_{1,k}=jm\atop 1\le m_{1,i}\le  B}\atop 1\le i\le k}1\Big) \Big(
                              \sum_{{m_{2,1}\ldots m_{2,k}=jn\atop 1\le m_{2,i}\le  B}\atop 1\le i\le k}1\Big),
\end{eqnarray}
and so,
\begin{eqnarray}\label{mcdk1}
 \sum_{  m_1, m_2\in \mathcal M_k(B)\atop \frac{m_2}{m_1}= \frac{n}{m}} 1
              &\le &\sum_{j=1}^{[   {B^k}/m ]}d_k(jn)d_k(jm)
\ \le \ d_k(n)d_k(m) \sum_{j=1}^{ [   {B^k}/m ]}d_k(j)^2
\cr &\ll_k & \  \frac {B^k}m \big(\log B\big)^{k^2-1}   d_k(n)d_k(m),
\end{eqnarray}
where we have used sub-multiplicativity of $d_k$ and the estimate
 $\sum_{m\le x} {d_{ k}^2(m)}= (C_k +o(1))x\log^{k^2-1} x$.
   See  \cite[(9.33)]{I} for instance.
Thus \begin{eqnarray}\label{bk}
 \mathcal B_k\ll_k B^{k+1}\big(\log B\big)^{k^2-1} \sum_{2\le m\le B}\sum_{n<m\atop{\gcd(n,m)=1}}\frac {d_k(n)d_k(m)}{m^2}.
 \end{eqnarray}
 Now, plainly
 \begin{eqnarray*}
  \sum_{2\le m\le B}\sum_{n<m\atop{\gcd(n,m)=1}}\frac {d_k(n)d_k(m)}{m^2} & \le & \sum_{2\le m\le B}\frac { d_k(m)}{m^2}\sum_{n<m }d_k(n) \cr & \ll_k& (\log  B)^{k-1}\sum_{2\le m\le B}\frac { d_k(m)}{m }
 \cr &\ll_k & (\log  B)^{2k-1}  ,
\end{eqnarray*}
 since $\sum_{m\le x} d_{ k}(m)\sim  C_k x (\log  x)^{k-1}$, (see notably Theorem 14.9 in \cite{I}),
and further that $ \sum_{n\le x} \frac {d_k(n)}n\sim C_k (\log x)^{k}$.
This along with \eqref{bk} implies
 \begin{eqnarray}\label{bk1}
 \mathcal B_k&\ll_k& B^{k+1}\big(\log B\big)^{k^2+2k-2}.
 \end{eqnarray}
  By combining and since  $\#\big(\mathcal M_k(B)\big)= B^k$,
    $$N_k(B)\ll_k B^{k+1}{\big(\log B\big)}^{k^2+2k-2}.$$
\end{proof} 

%%%%%%%%%%%%%%%%%%%%%%%%%%%%%
 \subsection{Problem.}\label{s5}
 Consider Equation \eqref{e2} with $F=\{n\le B: n\ \hbox{squarefree}\}$. For the study of this very interesting case,   part of the proof of Theorem 1.3   can probably still be used. This will be investigated elsewhere.

   %%%%%%%%%%%%%%%%%%%%%%%%%%%%%

\bigskip

\noindent {\bf Acknowledgement.} We wish to thank the anonymous referee for a thorough reading of the paper and helpful remarks.

 %%%%%%%%%%%%%%%%%%%%%%%%%%%%%

\end{document}